\newtheorem{theorem}{Theorem}[section]
\newtheorem{proposition}[theorem]{Proposition}
\newtheorem{cor}[theorem]{Corollary}
\newtheorem{lemma}[theorem]{Lemma}
\newtheorem{que}{Question}
\newtheorem*{que*}{Question}
\theoremstyle{definition}
\newtheorem{definition}[theorem]{Definition}
\theoremstyle{remark}
\newtheorem{claim}[theorem]{Claim}
\newcommand{\R}{\mathbb{R}}
\newcommand{\Q}{\mathbb{Q}}
\newcommand{\Aa}{\mathcal{A}}
\newcommand{\Ff}{\mathcal{F}}
\newcommand{\Vv}{\mathcal{V}}
\newcommand{\Uu}{\mathcal{U}}
\DeclareMathOperator{\interior}{int}
\newtheoremstyle{less}      
  {1pt}                           
  {1pt}                           
  {\itshape}                     
  {1em}                              
  {\normalfont}                   
  {.}                             
  { }                             
  {}
\theoremstyle{plain}
\newtheorem{observation}{Observation}
\newcommand{\N}{\mathbb{N}}
\newcounter{alphsubsection}[section]
\newcounter{Alphsubsection}[section] 
\newtcolorbox{mybox}[3][]
{
  colframe = #2!25,
  colback  = #2!10,
  coltitle = #2!20!black,  
  title    = {#3},
  #1,
}
\newtcolorbox{simplebox}{
  colframe=black,    
  colback=white,     
  boxrule=0.5pt,     
  arc=0mm,           
  left=1mm,          
  right=1mm,         
  top=1mm,           
  bottom=1mm         
}
\title{Normality in the square of the Sorgenfrey line}
\author{Paul Szeptycki}
\address{Department of Mathematics and Statistics, York University, 4700 Keele St, Toronto, ON, Canada, M3J 1P3.}
\email{szeptyck@yorku.ca}
\author{Hongwei Wen}
\address{Department of Mathematics and Statistics, York University, 4700 Keele St, Toronto, ON, Canada, M3J 1P3.}
\email{wenhongw@yorku.ca}
\date{}
\keywords{$Q$-set, $\lambda$-set, entangled sets, Sorgenfrey topology, normality}
\subjclass[2020]{Primary: 54D15, 54G20 Secondary: 03E50, 03E05, 03E75}
\thanks{The first author acknowledges support from NSERC Grant RGPIN-2025-06324.}
\begin{document}

\noindent                                             
\begin{picture}(150,36)                               
\put(5,20){\tiny{Submitted to}}                       
\put(5,7){\textbf{Topology Proceedings}}              
\put(0,0){\framebox(140,34){}}                        
\put(2,2){\framebox(136,30){}}                        
\end{picture}                                         
\vspace{0.5in}

\begin{abstract} We consider sets of reals X endowed with the Sorgenfrey lower limit topology denoted $X[\leq]$. Przymusi\'nski proved that if X is a Q-set then $(X[\leq])^2$
is normal. 
Todor\v cevi\'c proved if X is an entangled set, then all finite power of $X[\leq]$ are hereditarily Lindel\"{o}f, hence $(X[\leq])^2$ is normal. 
We construct from CH a subset X of reals, such that X is not a $\lambda$-set (hence not a Q-set), X is not 2-entangled, but still $(X[\leq])^2$
is normal. We will also prove the natural analogue of Przymusi\'nski’s theorem for $\lambda$-sets, namely, if X is a $\lambda$-set then $(X[\leq])^2$ is pseudo-normal.

\end{abstract}

\renewcommand{\bf}{\bfseries}
\renewcommand{\sc}{\scshape}
\vspace{0.5in}
\maketitle

\section{Introduction}
A $Q$-set is an uncountable subset of the reals with the property that every subset is $G_\delta$ (equivalently, $F_\sigma$) in the relative Euclidean subspace topology. The notion was first introduced in \cite{hausdorff1933} and much later $Q$-sets were used to produce consistent counterexamples to the Normal Moore Space Conjecture \cite{Bing1951}. For example, if $X\subseteq 2^\omega$, the set of branches $b_x=\{x\upharpoonright n:n<\omega\}$ in $2^{<\omega}$ is an almost disjoint family, and the corresponding $\Psi$-space on $2^{<\omega}$ is normal if and only if $X$ is a $Q$-set. Similarly, if $X$ is a subset of the $x$-axis in the upper half plane of ${\mathbb R}^2$, then the Moore-Niemyitzki Plane topology (a.k.a. the bubble space) is normal if and only if $X$ is a Q-set. For more on these examples see \cite{T}. On the other hand Prymusi\'nski \cite{T.Przymusinski1973} proved that if $X$ is a Q-set endowed with the Sorgenfrey Topology then $X^2$ is normal. 

Recall, the Sorgenfrey line is the real line endowed with the topology, also known as the lower limit topology, generated by using the half-open intervals $[a,b)$ as a basis. For a set of reals $X$ we will denote $X[\leq]$ as the resulting topology taking $X$ as a subspace of the Sorgenfrey line.

We first observe that the converse to Przymusi\'nski's theorem does not hold: Todor\v cevi\'c observed that if $X$ is an entangled set \cite{Todorcevic1985} of reals and is endowed with the Sorgenfrey topology, then every finite power is both hereditarily Lindel\"of and hereditarily separable \cite{Todorcevic1989}.  Recall that CH implies both the existence of entangled sets of reals and the non-existence of $Q$-sets. So $(X[\leq])^{2}$ may be normal even if $X$ is not a $Q$-set. 


There is the weaker notion of a set of reals $X$ being a $\lambda$-set \cite{Miller1984}. $X$ is a $\lambda$-set if every countable subset is relatively $G_\delta$ in the Euclidean topology. While the existence of a $Q$-set of reals is independent of ZFC, there are $\lambda$-sets in ZFC. Analogous to the $Q$-set results mentioned above, the $\Psi$-space over the set of branches of a subset $X\subseteq 2^\omega$ is pseudo-normal (meaning that every pair of closed sets, one of which is countable, can be separated by disjoint open sets) if and only if $X$ is a $\lambda$-set and also $X$ is a $\lambda$-set if and only if the Moore-Niemytzki plane over $X$ is pseudo-normal (see \cite{vD}).

 We provide some partial converses to Przymusi\'nski's theorem underscoring necessary conditions for normality of $(X[\leq])^2$.
  We show that the expected analogous result for the Sorgenfrey topology on a $\lambda$-set holds. Namely, if $X$ is a $\lambda$-set of reals, then $(X[\leq])^2$ is pseudo-normal. We also construct, assuming CH, a set of reals for which $X$ is concentrated on a countable dense subset (so not a $\lambda$-set) and so that $X$ is also not 2-entangled, but still $X[\leq]^2$ is normal.


We will use the following terminology and notation throughout. 
\begin{definition}
    Let $X$ be a topological space. Let $A,B$ be subsets in $X$, $A$ has the \textbf{covering property with respect to $B$} iff there exists a countable open cover $\{U_n\}_{n<\omega}$ such that $\overline{U_n}\cap B=\emptyset$. If there is no ambiguity in the context, we just say $A$ has the \textbf{covering property}.
\end{definition}
\begin{definition}
    Let $z=(x,y)\in \R^2$, $r>0$ in $\R$, denote the basic Sorgenfrey neighbourhood of radius $r$ around $z$ to be $U(z,r)= [x,x+r)\times[y,y+r)$.
\end{definition}
Since we will have two topologies on $\R$, we will use the following notation for closure and compliments.
\begin{definition}
    For $X\subseteq \R$ be a subset of reals, we denote $\overline{X}^E$ as the \textbf{Euclidean closure} and $\interior_E(X)$ as \textbf{Euclidean interior}. We denote $\overline{X}^{\sigma}$ and $\interior_\sigma(X)$ to denote the \textbf{Sorgenfrey closure} and \textbf{Sorgenfrey interior} respectively. 
\end{definition}
 The Sorgenfrey topology is finer than the Euclidean topology, i.e. every Euclidean open set is also Sorgenfrey open but not vice versa.

\begin{definition}
    Let $A,B\subseteq \R$, $f:A\to B$, denote the \textbf{graph} of $f$ in $\R^2$ to be $G_f=\{ (x,f(x)):x\in A\}$.
\end{definition}

All of our topological terminology is standard and we refer the reader to Engelking \cite{engelking1989general} for any undefined notions. 

\section{Q-sets and the Sorgenfrey square}

We begin by recalling a simple fact about monotone functions that we will use to establish a partial converse to Przymusi\'nski's theorem. 
\begin{lemma}\label{lem:homeo}
    Let $A,B$ be subsets of reals. For any monotone decreasing function $f:A\to B$, there is a countable set $B_0\subseteq B$ such that $f\upharpoonright (A\setminus f^{-1}(B_0)): A\setminus f^{-1}(B_0) \to f(A)\setminus B_0$ is a Euclidean homeomorphism.
    
    If, further, $f$ is strictly decreasing, then there is a countable $A_0\subseteq A$ such that $f\upharpoonright (A\setminus A_0):A\setminus A_0\rightarrow f(A\setminus A_0)$ is a Euclidean homeomorphism. \\
\end{lemma}



\begin{lemma}\label{lem:fsigma}
    Suppose $X,Y$ are sets of reals. Suppose that
    \begin{enumerate}
        \item there exists a strictly decreasing Euclidean homeomorphism $f:X \to Y$.
        \item For $A\subseteq X$, define
        \begin{align*}
        B&=X\setminus A\\
        G_{f|_{A}}&= \{(a,f(a)):a\in A\} \\
        G_{f|_{B}}&= \{(b,f(b):b\in B)\}.
    \end{align*}
    \end{enumerate}
    If there exists $\Uu,\Vv \subseteq X\times Y$ Sorgenfrey open such that $\Uu\supseteq G_{f|_A}, \Vv\supseteq G_{f_B}$ and $\Uu\cap\Vv=\emptyset$, then $A$ is relatively $F_\sigma$ in $X$ in the Euclidean topology.
\end{lemma}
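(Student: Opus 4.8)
The plan is to exploit the anti-diagonal geometry. Since $f$ is strictly decreasing, each basic Sorgenfrey box $U((a,f(a)),r)=[a,a+r)\times[f(a),f(a)+r)$ meets the graph $G_f$ only at $(a,f(a))$: a point $(x,f(x))$ lies in it iff $x\ge a$ and $f(x)\ge f(a)$, and the second condition forces $x\le a$, so $x=a$. Thus $G_f$ is a closed discrete subspace of the Sorgenfrey square, and the whole content of the hypothesis lies in the \emph{global} disjointness $\Uu\cap\Vv=\emptyset$ rather than in separating individual graph points. First I would, for each $a\in A$, choose $r_a>0$ with $U((a,f(a)),r_a)\subseteq\Uu$, and for each $b\in B$ choose $s_b>0$ with $U((b,f(b)),s_b)\subseteq\Vv$; disjointness of $\Uu,\Vv$ then forces the boxes $U((a,f(a)),r_a)$ and $U((b,f(b)),s_b)$ to be disjoint for every $a\in A$, $b\in B$.

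The combinatorial core is to turn box-disjointness into inequalities. Two half-open rectangles are disjoint iff their $x$-projections are disjoint or their $y$-projections are disjoint. Distinguishing the cases $a<b$ and $a>b$ and using that $f$ reverses order (so $a<b\Rightarrow f(a)>f(b)$), box-disjointness says: if $a<b$ then $a+r_a\le b$ or $f(b)+s_b\le f(a)$; and if $a>b$ then $b+s_b\le a$ or $f(a)+r_a\le f(b)$.

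With this in hand I would stratify $A$ by the size of its radius: set $A_n=\{a\in A: r_a>1/n\}$, so that $A=\bigcup_n A_n$. I claim each $A_n$ has Euclidean closure in $X$ disjoint from $B$, which yields $A=\bigcup_n\overline{A_n}$ and hence the $F_\sigma$ conclusion. To prove the claim, suppose $b\in B$ and $a_k\in A_n$ with $a_k\to b$; since $a_k\ne b$, pass to a subsequence lying entirely on one side. If $a_k\to b^{+}$ then, as $a_k\to b$ eventually defeats the option $b+s_b\le a_k$, the inequalities force $f(b)-f(a_k)\ge r_{a_k}>1/n$; if $a_k\to b^{-}$ then $a_k+r_{a_k}>b$ eventually, forcing $f(a_k)-f(b)\ge s_b>0$. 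Either way a uniform nonvanishing gap between $f(a_k)$ and $f(b)$ is produced, contradicting $f(a_k)\to f(b)$.

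The single place the hypotheses must be used in full is exactly this last step: the contradiction rests on continuity of $f$ at $b$, which is what ``homeomorphism'' supplies. The essential point is that continuity is genuinely needed — for a merely strictly decreasing $f$ the one-sided jumps would leave a residue of points of $B$ in these closures (jumps of size $\ge 1/n$ being locally finite, the residue is at worst countable), and one would recover $A$ only modulo a countable set; this is precisely the reduction effected by Lemma~\ref{lem:homeo} before the present lemma is applied. Granting continuity, the residue is empty, so each $\overline{A_n}$ is a relatively closed subset of $X$ contained in $A$, and $A=\bigcup_n\overline{A_n}$ exhibits $A$ as relatively $F_\sigma$ in $X$.
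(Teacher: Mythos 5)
Your proof is correct and follows essentially the same route as the paper's: both stratify $A$ as $A=\bigcup_n A_n$ according to the radius of the Sorgenfrey box around $(a,f(a))$ contained in $\Uu$, and both show that the Euclidean closure of each $A_n$ in $X$ misses $B$ by combining disjointness of the boxes with continuity of $f$ at a putative limit point $b\in B$. Your intermediate step of translating box-disjointness into the explicit inequalities is a cleaner packaging of the same contradiction the paper derives by exhibiting a point in $\Uu\cap\Vv$, but it is not a different argument.
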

\begin{proof}
     By assumption, let $\Uu,\Vv$ be Sorgenfrey open sets such that $\Uu\supseteq G_{f|_A}, \Vv\supseteq G_{f|_B}$.\\
    Note that $A=\bigcup_{m<\omega} A_m$ where $A_m=\{a\in A: U((a,f(a)),1/m)\subseteq \Uu\}$, we claim that 
    \begin{claim}\label{clm:closure}
        $\overline{A_m}^E \cap X\subseteq A$
    \end{claim}
    With this claim, we have $A=\bigcup_{m<\omega} (\overline{A_m}^E \cap X)$ which is a relatively Euclidean $F_\sigma$ set. So it suffices to prove the claim.
    \begin{proof}[Proof of \ref{clm:closure}]
        \;\\
        \begin{center}
            \includegraphics[]{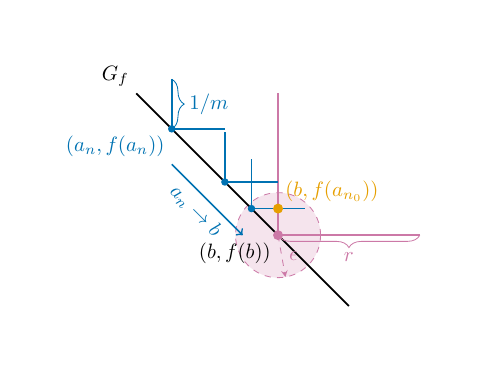}
        \end{center}
        Assume otherwise, if $a_n\to b$ for $a_n\in A_m$ and $b\in B$. There exists an $r>0$ such that $U((b,f(b)),r)\subseteq \Vv$. By continuity of $f$, we have $f(a_n)\to f(b)$ as well. Let $\epsilon <\min(1/m,r)$, without loss of generality there exists $a_{n_0}$ such that $b>a_{n_0}>b-\epsilon$ and $f(b)<f(a_{n_0})<f(b)+\epsilon$. Since $\epsilon<\min(1/m,r)$, we have
        \begin{align*}
            &f(b)<\mathbf{f(a_{n_0})}<f(b)+\epsilon< f(b) + r&&\implies \mathbf{f(a_{n_0})}\in[f(b), f(b)+r)\\
            &a_{n_0}<\mathbf{b}< a_{n_0}+\epsilon<a_{n_0}+1/m&&\implies \mathbf{b}\in [a_{n_0},a_{n_0}+1/m)
        \end{align*}
        Where the first expression implies
        $(b,f(a_{n_0}))\in U((b,f(b)),r))\subseteq \Vv$ However, the second expression implies $(a_{n_0},f(a_{n_0}))$ is also a point in $U((a_{n_0},f(a_{n_0})),1/m)\subseteq \Uu$, but $\Uu$ and $\Vv$ should have empty intersection hence this is a contradiction.
    \end{proof}
    
\end{proof}
We now prove the following partial converse to Przymusi\'nski's theorem:
\begin{proposition}\label{normalconverse}
     Let $(X[\leq])^2$ be normal. If there exists a strictly decreasing Euclidean homeomorphism $f:X\to X$, then $X$ is a $Q$-set.
\end{proposition}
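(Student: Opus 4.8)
The plan is to use Lemma~\ref{lem:fsigma} as the engine and show that \emph{every} subset $A \subseteq X$ is relatively $F_\sigma$, which by definition makes $X$ a $Q$-set. Fix an arbitrary $A \subseteq X$ and set $B = X \setminus A$. The strictly decreasing homeomorphism $f$ gives us the two graphs $G_{f|_A}$ and $G_{f|_B}$ inside $X^2$. The key observation I would establish first is that these two graphs are \emph{disjoint closed subsets} of $(X[\leq])^2$: they are disjoint because $A \cap B = \emptyset$ and $f$ is a function, and I would argue each graph is closed in the Sorgenfrey square. Closedness of the full graph $G_f = G_{f|_A} \cup G_{f|_B}$ should follow from $f$ being a strictly decreasing homeomorphism — this is exactly the geometric configuration in which a decreasing graph becomes closed (even discrete, in fact) in the Sorgenfrey square, since for a point $(x,f(x))$ the basic neighborhood $U((x,f(x)),r) = [x,x+r) \times [f(x),f(x)+r)$ meets the decreasing graph only at the point itself. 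Once $G_f$ is closed and the two pieces are separated by the function values, each piece is closed.

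Having two disjoint closed sets in the normal space $(X[\leq])^2$, normality yields disjoint open sets $\Uu \supseteq G_{f|_A}$ and $\Vv \supseteq G_{f|_B}$ with $\Uu \cap \Vv = \emptyset$. This is precisely the hypothesis of Lemma~\ref{lem:fsigma}, so I may conclude that $A$ is relatively $F_\sigma$ in $X$. Since $A$ was an arbitrary subset of $X$, every subset of $X$ is relatively $F_\sigma$, and hence (equivalently, by taking complements) every subset is $G_\delta$, which is exactly the definition of a $Q$-set. The logical skeleton is therefore short: closedness of the graph pieces $\Rightarrow$ normality separates them $\Rightarrow$ Lemma~\ref{lem:fsigma} gives $F_\sigma$ $\Rightarrow$ $Q$-set.

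The main obstacle I anticipate is the verification that each graph $G_{f|_A}$ is genuinely closed in the Sorgenfrey square, since this is the only nontrivial topological input and everything downstream depends on it. The subtlety is that Sorgenfrey-closedness is more delicate than Euclidean-closedness, and I must confirm that a limit point of $G_{f|_A}$ from the Sorgenfrey-right-and-up directions cannot escape the graph. Concretely, I would check that if $(x_0, y_0)$ lies in the Sorgenfrey closure of $G_f$, then approaching sequences satisfy $x_n \geq x_0$ and $f(x_n) \geq f(x_0)$ eventually, but $f$ strictly decreasing forces $x_n \leq x_0$, pinning $x_n \to x_0$ and hence, by continuity of $f$, forcing the limit to be the genuine graph point $(x_0,f(x_0))$. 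It is worth noting that this uses $f$ mapping $X$ \emph{onto} $X$ so that the relevant limit point actually has the form $(x_0, f(x_0))$ with $x_0 \in X$. Once this closedness is secured, splitting $G_f$ into the $A$-part and $B$-part is immediate since they are disjoint relatively closed subsets of the closed set $G_f$, and the remainder of the argument is a direct appeal to the already-established lemma.
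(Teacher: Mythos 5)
Your proposal is correct and follows essentially the same route as the paper: the graph $G_f$ of a strictly decreasing homeomorphism is closed and discrete in $(X[\leq])^2$, so $G_{f|_A}$ and $G_{f|_{X\setminus A}}$ are disjoint closed sets, normality separates them, and Lemma~\ref{lem:fsigma} yields that $A$ is relatively $F_\sigma$. The only difference is that you spell out the verification that the graph is Sorgenfrey-closed and discrete, which the paper states without proof; your sketch of that verification is sound.
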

\begin{proof}
    Since $f$ is an Euclidean homeomorphism, its graph $G_f$ is Euclidean closed and so Sorgenfrey closed. Since $f$ is strictly decreasing, $G_f$ is also Sorgenfrey discrete. Hence any subset of $G_f$ is Sorgenfrey closed. In particular, for any $A\subseteq X$, $G_{f|_A}$ and $G_{f|_{X\setminus A}}$ can be separated by Sorgenfrey open sets (by our normality assumption). By \ref{lem:fsigma}, $A$ is relatively Euclidean $F_\sigma$ in $X$. Hence $X$ is a $Q$-set.
\end{proof}
The following is a slightly more general partial result:
\begin{proposition}\label{Prop2.5}
    Let $(X[\leq])^2$ be normal. If there exists a strictly decreasing function $f:X\to X$ (not necessarily continuous with respect to the Euclidean topology), then $X\setminus C$ is a $Q$-set for some countable subset $C$.
\end{proposition}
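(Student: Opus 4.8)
The plan is to delete a countable set on which $f$ fails to be continuous and then run the argument of Lemma~\ref{lem:fsigma}. First I would apply Lemma~\ref{lem:homeo} to $f$ to obtain a countable $A_0\subseteq X$ such that $g:=f\upharpoonright(X\setminus A_0)$ is a homeomorphism of $Y:=X\setminus A_0$ onto $g(Y)\subseteq X$. Taking $C=A_0$, it suffices to prove that every $A\subseteq Y$ is relatively $F_\sigma$ in $Y$, i.e. that $Y$ is a $Q$-set.

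Next I would check that the graph $G_g$ is closed and discrete in $(X[\leq])^2$. Discreteness is immediate from strict monotonicity, since $U((a,g(a)),r)$ meets $G_g$ only at $(a,g(a))$: for $x>a$ one has $g(x)<g(a)$, and for $x<a$ one has $x\notin[a,a+r)$. For closedness, suppose some $(p,q)\in X^2\setminus G_g$ were in the Sorgenfrey closure of $G_g$; then there are $x_n\in Y$ with $x_n\geq p$, $x_n\to p$, $g(x_n)\geq q$, and $g(x_n)\to q$. If $x_n=p$ for infinitely many $n$ then $g(p)=q$ and $(p,q)\in G_g$, a contradiction; otherwise, passing to a strictly decreasing subsequence $x_{n_k}\downarrow p$, monotonicity forces $g(x_{n_k})$ to increase strictly to $q$, so $g(x_{n_k})<q$, contradicting $g(x_{n_k})\geq q$. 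Note this uses only monotonicity, not continuity.

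Finally, fix $A\subseteq Y$ and put $B=Y\setminus A$. The graphs $G_{g\upharpoonright A}$ and $G_{g\upharpoonright B}$ are disjoint subsets of the closed discrete set $G_g$, hence closed in $(X[\leq])^2$, so by normality they lie in disjoint Sorgenfrey open sets $\Uu,\Vv$. I would then repeat the proof of Lemma~\ref{lem:fsigma} with $g,Y$ in place of $f,X$: writing $A=\bigcup_n A_n$ with $A_n=\{a\in A:U((a,g(a)),1/n)\subseteq\Uu\}$, the inclusion $\overline{A_n}^{E}\cap Y\subseteq A$ is proved exactly as before, the step $g(a_n)\to g(b)$ being justified by continuity of $g$ on $Y$. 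Thus $A=\bigcup_n(\overline{A_n}^{E}\cap Y)$ is relatively $F_\sigma$ in $Y$, so $Y=X\setminus A_0$ is a $Q$-set.

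The one genuinely new point is the closedness of $G_g$, which I expect to be the main obstacle to state cleanly, since it must be verified in $X^2$ (not merely in $Y\times g(Y)$) and relies on monotonicity rather than continuity. The remaining care is bookkeeping: Lemma~\ref{lem:fsigma} is phrased for a homeomorphism of $X$ onto itself, so I must confirm that its proof never uses surjectivity and that every point $(b,g(b))$ occurring in the argument has $b\in Y$, so that $g(b)$ is defined and continuity of $g$ applies.
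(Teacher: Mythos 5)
Your proposal is correct and follows essentially the same route as the paper: pass to the co-countable set $Y=X\setminus A_0$ from Lemma~\ref{lem:homeo}, observe that the graph of the restricted map is closed and discrete in $(X[\leq])^2$, separate the two sub-graphs by normality, and invoke (the proof of) Lemma~\ref{lem:fsigma}. The only difference is that you spell out the closed-discreteness of $G_g$ and the harmless non-surjectivity of $g$, points the paper's proof asserts or glosses over, and your verifications are sound.
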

\begin{proof}
    By \ref{lem:homeo}, there exists a countable set $C$ such that $f|_{X\setminus C}:X\setminus C\to f(X\setminus C)$ is a Euclidean homeomorphism. Then let $g=f|_{X\setminus C}$, the graph $G_g$ of $g$ is closed and discrete. Since $(X[\leq])^2$ is normal, any two subsets of $G_g$ can be separated by Sorgenfrey open subsets of $X^2$. In particular, let $X^\prime=X\setminus C$, take any subset $A\subseteq X^\prime$, $G_{g|_A}$ and $G_{g|_{X\prime\setminus A}}$ can be separated by open sets $\Uu,\Vv$. Take their intersection to $(X^\prime)^2$, $\Uu\cap (X^\prime)^2$ and $\Vv\cap(X^\prime)^2$ are still Sorgenfrey open in $(X^\prime)^2$. By \ref{lem:fsigma}, $A$ is relatively Euclidean $F_\sigma$ in $X^\prime$, hence $X^\prime$ is a $Q$ set as required.
\end{proof}
Baumgartner proved that PFA implies that any two $\aleph_1$-dense sets of reals are order isomorphic \cite{Baumgartner1973}. This consequence of PFA has come to be referred to as Baumgartner's Axiom (BA). From the proof of the above proposition we obtain the following corollary:
\begin{cor}\label{BA}
    BA implies that if $X$ is an uncountable set of reals and $(X[\leq])^2$ normal then $X$ contains an uncountable $Q$-set. 
\end{cor}
\begin{proof} Any uncountable $X$ contains an $\aleph_1$-dense subset $X^\prime$. By BA it is order isomorphic to its reverse order. So by the proof of Proposition \ref{Prop2.5} there is a countable $C$ such that $X^\prime\setminus C$ must be a $Q$-set.\end{proof}

For the remainder of this section we focus on constructing, from CH, an uncountable set of reals whose square, under the Sorgenfrey topology, is normal. Of course, under CH, no uncountable set of reals is a Q-set and so the converse of Przymusi\'nski's theorem fails assuming CH. 

We begin by recalling another simple folklore result on monotone real-valued functions. For completeness sake, we provide a proof. 

\begin{lemma}\label{lem:extend}
    Let $f:A\to \R$ be strictly decreasing on uncountable $A\subseteq \R$, then there is a strictly decreasing homeomorphism $F$ on $\overline{A}^E \setminus C$ for some countable set $C$ such that $f(a)=F(a)$ for all $a\in A\setminus C$.
\end{lemma}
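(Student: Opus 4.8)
The plan is to build the extension from the one-sided limits of $f$ and then discard a countable set of points at which strict monotonicity fails. Because $f$ is strictly decreasing, for each $x\in\overline A$ the quantities $f^-(x)=\inf\{f(a):a\in A,\ a<x\}$ and $f^+(x)=\sup\{f(a):a\in A,\ a>x\}$ exist in $[-\infty,+\infty]$, and $a>x>a'$ forces $f(a)<f(a')$, so $f^+(x)\le f^-(x)$. I would define $\tilde f$ on $\overline A$ by setting $\tilde f(x)=f(x)$ for $x\in A$, $\tilde f(x)=f^+(x)$ when $x\in\overline A\setminus A$ is a right-limit point of $A$, and $\tilde f(x)=f^-(x)$ when $x\in\overline A\setminus A$ is only a left-limit point. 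Every point of $\overline A\setminus A$ is a limit point of $A$, so these cases are exhaustive, and by construction $\tilde f$ agrees with $f$ on $A$.

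Next I would dispose of two easy points. Since $f$ is real-valued and decreasing, $f^-(x)=-\infty$ can occur only when $f$ is unbounded below on $A\cap(-\infty,x)$, which forces $x=\sup A$; symmetrically $f^+(x)=+\infty$ forces $x=\inf A$. Placing these (at most two) points into $C$ makes $\tilde f$ finite on $\overline A\setminus C$. One then checks that $\tilde f$ is weakly decreasing: for $x<y$ in $\overline A$ this follows from the monotonicity of $f$ and elementary properties of the one-sided limits, treating separately the cases $A\cap(x,y)\neq\emptyset$ (where $f^+(x)\ge f(a)\ge f^-(y)$ for $a\in A\cap(x,y)$) and $A\cap(x,y)=\emptyset$ (where $f(a)>f(a')$ for $a<x<y<a'$ gives the inequality directly). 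This part is routine case checking.

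The essential step is to upgrade weak monotonicity to strict monotonicity by removing only countably many points. I would group $\overline A\setminus C$ by the equivalence relation $x\sim y\iff\tilde f(x)=\tilde f(y)$; by weak monotonicity each class is the trace on $\overline A$ of an interval of $\R$. The key observation is that a class with at least two points lies in some open interval $(p,q)$ on which $\tilde f$ is constant, and since $f$ is strictly decreasing on $A$ such an interval contains at most one point of $A$; as every point of $\overline A\cap(p,q)$ is a limit of points of $A$, it follows that $\overline A\cap(p,q)$ is finite. Hence every nondegenerate class is finite, and since the convex hulls of distinct nondegenerate classes are pairwise disjoint intervals of positive length, there are only countably many of them. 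I would then add to $C$ all but one point of each nondegenerate class, always retaining the unique point of $A$ in the class when there is one, so that $C$ stays countable and disjoint from $A$. On the remaining set $\overline A\setminus C$ no two points share a $\tilde f$-value, so $\tilde f$ is strictly decreasing there, and it still extends $f$.

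The main obstacle is precisely this strictness step: one must see that the points where the weakly decreasing extension is constant organize into countably many finite blocks. The finiteness of each block is what makes the argument close, and it rests on the interaction between the strict monotonicity of $f$ on $A$ and the fact that an interior limit point of $A$ cannot sit in an interval meeting $A$ in a single point; the remaining bookkeeping (endpoints with infinite limits and the choice of representatives inside $A$) is routine.
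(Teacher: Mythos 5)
Your proof is correct and follows essentially the same route as the paper's: extend $f$ to a weakly decreasing function on $\overline{A}$ via one-sided limits, then observe that each nondegenerate level set contains at most one point of $A$ and hence at most three points in total, so that deleting a countable set restores strict monotonicity. You spell out the extension step that the paper merely asserts, but the key counting argument is the same.
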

\begin{proof}
    $f$ can be extended to a decreasing function on $\overline{A}^E$. Note that this function is not necessarily strictly decreasing. By \ref{lem:homeo}, we can remove a countable set $B_0$ such that  $f$ is a Euclidean homeomorphism $F$ on $\overline{A}^E\setminus f^{-1}(B_0)$ hence strictly decreasing. We will show $f^{-1}(B_0)$ is countable by showing for every $b\in B_0$, $f^{-1}(b)$ has at most 3 points. Assume there are 4 points $x<y<z<w$ in $f^{-1}(b)$. Note that between $x$ and $w$ there is at most 1 point in $A$, since for every $z_1<z_2\in A$,
    \[f(x)\geq f(z_1)\geq f(z_2)\geq f(w)=f(x)\]
    but $f$ is strictly decreasing on $A$, so there can only be at most 1 point in $A$. Hence, there are no points in $\overline{A}^E\setminus A$. But $y,z$ can not be both in $A$, we have a contradiction. Let $C=f^{-1}(B_0)$, we have the result.
\end{proof}

 We also recall definition of entangled sets from \cite{Todorcevic1985}. Let $L\subseteq \R$, denote $(L)^n\subseteq L^n$ as the set of all $\langle x_0,\dots, x_{n-1}\rangle\in L^n$, such that $x_0<x_1<\dots<x_n$. A set $L\subseteq \R$ is $\mathbf{n}$\textbf{-entangled} iff for any set of disjoint $n$-tuples $A\subseteq (L)^n$ of size $\omega_1$ and for every $s\in 2^n$ there exists $x,y\in A$ such that $\forall i\leq n, (x_i<y_i)\iff s_i=0$. $L$ is \textbf{entangled} iff $L$ is $\mathbf{n}$\textbf{-entangled} for all $n<\omega$. Let $s\in 2^n$, $A\subseteq (L)^n$ is said to have \textbf{type} $s$ iff  $\forall i\leq n(x_i<y_i)\iff s_i=0$.\\

The following lemma is proven in \cite{Todorcevic1985} (see property (5) in the proof of  Theorem 1 in \cite{Todorcevic1985}).

\begin{lemma}\cite{Todorcevic1985}\label{lem:entangled}
    Let $E\subseteq \R$ be such that for any continuous function $f$ from a $G_\delta$ subset of ${\mathbb R}^n$ into ${\mathbb R}$, $(n<\omega)$ there is an $\alpha<2^\omega$ such that
        \[
        \forall \beta\geq \alpha, f(\{r_\gamma:\gamma<\beta\}^n) \cap E \subseteq \{r_\gamma:\gamma<\beta\}
        \]

    Then $E$ is entangled.
\end{lemma}

\begin{theorem}\label{setE}
    Under CH, there exists a set $E$ such that 
    \begin{enumerate}
        \item For any continuous function $f$ from a $G_\delta$ subset of ${\mathbb R}^n$ into $E$, $(n<\omega)$ there is an $\alpha<2^\omega$ such that
        \[
        \forall \beta\geq \alpha, f(\{r_\gamma:\gamma<\beta\}^n) \cap E \subseteq \{r_\gamma:\gamma<\beta\}
        \]
        \item $E$ contains $\Q$ and is concentrated on ${\mathbb Q}$. 
    \end{enumerate}
\end{theorem}
Recall that $E$ is concentrated on $\Q$ means that $E\setminus U$ is countable for every open set $U\supseteq \Q$, so $E$ is not a $\lambda$-set since $\Q$ is not a relative $G_\delta$-set in $E$.
\begin{proof}
    Define
    \begin{align*}
        \Ff=\{&f:U\to \R:  U \subseteq \R^n,n<\omega, U \text{ is } G_\delta\}\}\\
        \Aa= \{&A\subseteq\R: A \text{ is open},\ A\supseteq \Q\}
    \end{align*}
    Note that both $\Ff,\Aa$ have cardinality $\mathfrak{c}$, so under $CH$ we can enumerate them in $\omega_1$ as follows
    \[\Ff=\{f_{\alpha}:\alpha<\omega_1\}, \Aa=\{A_\alpha:\alpha<\omega_1\}. \]
     For every $\alpha<\omega_1$ let $n_\alpha$ be the parity of the function $f_\alpha$ and define the following set:
    \begin{align*}
        C_\alpha&= \bigcap_{\beta\leq \alpha} A_\beta \setminus \Q.
    \end{align*}
    Note that $C_\alpha$ is uncountable by the Baire Category Theorem. We now define elements $x_\alpha\in \R$ and countable sets $I_\alpha\subseteq {\mathbb R}$ recursively as follows: \\
    \textbf{Stage 0:} Let $I_0=f_0({\mathbb Q}^{n_0})$ and take $x_0\in C_0\setminus I_0$.\\
    \textbf{Stage $\boldsymbol{\alpha<\omega_1}$:} We have $x_\beta$ and $I_\beta$ defined already for all $\beta<\alpha$. For any $\beta\leq \omega_1$, let $n_\beta$ be the parity of $f_\beta$, and let
    \begin{align*}
        X_\alpha&= {\mathbb Q}\cup \{x_\beta:\beta<\alpha\} \\
        I_\alpha&= \bigcup_{\beta\leq \alpha} f_\beta(X_\alpha^{n_\beta})\\
    \end{align*}
    Note that all of the above sets are countable, and $C_\alpha$ is uncountable. Hence, we can choose $x_\alpha\in C_\alpha\setminus (X_\alpha\cup I_\alpha )$. \\
    Finally we define $E=\Q\cup \{x_\alpha:\alpha<\omega_1\}$.\\
     \textbf{Verify Property (1):} Fix $n<\omega$, for any continuous function $g:A\to E$, by choice of $x_\alpha$, we have that for any $\beta \geq \alpha$,
     $f_{\alpha_0}(\{x_\gamma:\gamma <\alpha\}^n)$ can not take on any values in $\{x_\beta:\beta\geq\alpha_0\}$. Indeed for all $\beta>\alpha_0,$ $x_\beta\not \in I_\alpha \supseteq f_{\alpha_0}((\{x_\alpha:\alpha<\alpha_0\}\cup \Q)^n)$\\
    \textbf{Verify Property (2):} That $E$ is concentrated on $\Q$ follows easily from the fact that for every for every open set containing ${\mathbb Q}$ is enumerated as $A_\alpha$ and so $C_\beta\subseteq A_\alpha$ for all $\beta\geq \alpha$. \\
\end{proof}
As mentioned above, the set $E$ constructed is not a $\lambda$-set so also not a $Q$-set.

By \ref{lem:entangled}, the set $E$ is also entangled and in particular $2$-entangled. By \ref{lem:extend} this implies no monotone functions can be defined on an uncountable subset of $E$.

For our next observation, we will use the following result, proven by Przymusi\'nski (see the proof of Theorem 1 in \cite{T.Przymusinski1973}). 
\begin{lemma}\label{lem:przymusinski}
    Let $X\subseteq \R$ be any subset of reals. Fix any Sorgenfrey closed set $A,B \subseteq X^2$, and a Sorgenfrey open cover $\Aa$ of $A$. For any $z\in B^c$ choose $n_z\in \N$ such that $U(z,1/n_z)\cap B=\emptyset$, this is possible since $B$ is Sorgenfrey closed. Let $A\cap \overline{B}^E=\bigcup_{q\in\Q}A_q$ where
    \[A_q=\{ z=(x,y)\in A_1:x+y<q<x+y+1/n_z \}\]
    For any $q\in \Q$, we have
    \begin{enumerate}[label=(\arabic*)]
        \item $A_q= G_q \cup L_q$, where $L_q$ can be covered by countably many Sorgenfrey open sets of $\Aa$ and $G_q$ is a graph of some strictly decreasing function, 
        \item If every $G_q$ can be covered by countably many Sorgenfrey open sets in $\Aa$, then $X[\leq]^2$ is normal.
    \end{enumerate}
\end{lemma}
Recall, Todor\v cevi\'c noted \cite{Todorcevic1989} that if $E$ is entangled, then $E[\leq]$ is hereditarily Lindel\"of in all finite powers. In relation to this we have: 
\begin{theorem}\label{thm:E^2normal}
    Let $E$ be any subset of ${\mathbb R}$ that admits no uncountable strictly decreasing functions (e.g., the set constructed in Theorem \ref{setE}), then $(E[\leq])^2$ is normal.
\end{theorem}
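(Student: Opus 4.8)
The plan is to obtain normality from the implication \emph{regular $+$ Lindel\"of $\Rightarrow$ normal}. Since each basic box $U(z,r)=[x,x+r)\times[y,y+r)$ is clopen in the Sorgenfrey square (a half--open interval is clopen on the Sorgenfrey line, and a product of clopen sets is clopen), the space $(E[\leq])^2$ is zero--dimensional, hence regular; and for a regular Lindel\"of space every pair of disjoint closed sets can be separated — concretely, Lindel\"ofness gives each closed set the covering property with respect to the other, after which one applies the standard shrinking argument. So the entire content is to show that an arbitrary open cover of $E^2$ has a countable subcover, and it is exactly here that the hypothesis on $E$ is used.

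First I would reduce to covers $\mathcal{W}$ by basic boxes and split the plane using the coarser Euclidean topology. Set $O=\bigcup_{W\in\mathcal{W}}\interior(W)$, the union of the \emph{Euclidean} interiors of the covering boxes. Because the Euclidean plane is second countable, $O$ is Euclidean--Lindel\"of, so countably many boxes $W_n$ already satisfy $\bigcup_n \interior(W_n)\supseteq O$, and these $W_n$ cover $O\cap E^2$. The difficulty is concentrated in the boundary set $B=E^2\setminus O$: for each $z=(x,y)\in B$ there is $r_z>0$ with $U(z,r_z)$ contained in some member of $\mathcal{W}$, whence the open rectangle $(x,x+r_z)\times(y,y+r_z)$ lies in $O$ and so is disjoint from $B$.

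This disjointness is the engine. Writing $B=\bigcup_n B_n$ with $B_n=\{z\in B: r_z>1/n\}$ and tiling the plane by half--open squares of side $<1/n$, within any single such square every two distinct points of $B_n$ are incomparable in the strict product order: neither lies strictly up--and--to--the--right of the other. Away from the vertical and horizontal lines such a piece is therefore the graph of a strictly decreasing function, and the hypothesis that $E$ admits no uncountable strictly monotone function (in particular no uncountable strictly decreasing one) forces this generic part to be countable. The one genuinely delicate point is axis--parallel accumulation, since a single vertical or horizontal line could a priori carry uncountably many points of $B_n$. I would handle this by showing that only countably many vertical lines (and countably many horizontal lines) carry uncountably many points of $B_n$: the $y$--ranges of distinct ``thick'' vertical fibres are, by the antichain condition, pairwise disjoint nonempty intervals, of which there are only countably many. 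Each such thick line $\{c\}\times E$ carries the Sorgenfrey line topology and is hence (hereditarily) Lindel\"of, so it is covered by countably many members of $\mathcal{W}$.

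Assembling the pieces: the $W_n$ cover $O\cap E^2$; the countably many thick lines are each covered by countably many members of $\mathcal{W}$; and the remaining strictly--decreasing part is countable and covered pointwise. This gives a countable subcover of $E^2$, so $(E[\leq])^2$ is Lindel\"of and therefore normal. I expect the main obstacle to be precisely the bookkeeping around the thick axis--parallel lines: eliminating the strictly decreasing staircase is immediate from the hypothesis, but one must verify that vertical and horizontal accumulations cannot conspire to defeat countability, which is where the disjoint--intervals argument and the Lindel\"ofness of the Sorgenfrey line are needed. (Run verbatim on an arbitrary subspace, the same argument yields hereditary Lindel\"ofness, recovering Todor\v cevi\'c's observation for the entangled sets produced in Theorem \ref{setE}.)
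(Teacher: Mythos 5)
Your proof is essentially correct, but it takes a genuinely different (and in fact stronger) route than the paper. The paper never establishes Lindel\"ofness of $(E[\leq])^2$; it separates two given closed sets $A,B$ directly via the shoelace lemma, splitting $A$ into $A\setminus\overline{B}^{E}$ (handled by Euclidean hereditary Lindel\"ofness) and $A\cap\overline{B}^{E}$, and then slicing the latter by the rational anti-diagonal lines $x+y=q$ to force the non-increasing (antichain) condition on each piece. You instead prove that the whole square is Lindel\"of, using the tiling by small half-open squares in place of the lines $x+y=q$ to force the same antichain condition on the ``boundary'' set $B=E^2\setminus O$, and then invoke regular $+$ Lindel\"of $\Rightarrow$ normal. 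The combinatorial core is identical in both arguments: the antichain condition, the pairwise-disjoint-intervals argument controlling multi-point vertical and horizontal fibres (this is exactly the paper's Claim \ref{clm:ccc}), Lindel\"ofness of the Sorgenfrey line on each such fibre, and the hypothesis on $E$ to kill the residual strictly decreasing graph. What your version buys is more: it yields (hereditary) Lindel\"ofness of the square, hence paracompactness, and recovers Todor\v cevi\'c's observation as you note; what the paper's version buys is that it isolates precisely the ``covering property'' needed for normality, which is the template reused in the $\lambda$-set section.

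One small point to tighten: you declare a vertical line ``thick'' when it carries \emph{uncountably} many points of $B_n$, but after deleting only those lines the remainder need not be the graph of a function, since fibres of size between $2$ and $\aleph_0$ survive. The disjoint-intervals argument you invoke actually proves the stronger and correct statement --- only countably many vertical (resp.\ horizontal) lines meet $B_n\cap T$ ($T$ a tile) in \emph{more than one} point, since two points in a fibre give a nondegenerate interval and distinct such intervals are disjoint by the antichain condition --- so you should define ``thick'' as having at least two points; then the leftover really is a strictly decreasing injection from a subset of $E$ to $E$ and the hypothesis applies. With that adjustment the argument is complete. (Also note that the hypothesis must be read as ``no uncountable strictly \emph{decreasing} function,'' as you implicitly do; a literal reading of ``strictly monotone'' would exclude the identity.)
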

\begin{proof}
    Following the notation of \ref{lem:przymusinski} (1): $G_q$ is the graph of a strictly decreasing function, but by hypothesis on $E$, each $G_q$ is countable, hence Lindel\"of. By \ref{lem:przymusinski} (2), $(E[\leq])^2$ is normal.  
\end{proof}
Now one may ask if every $E\subseteq \R$ such that $(E[\leq])^2$ is normal is either a $Q$-set or is $2$-entangled. However, the following example shows that is not the case.

\begin{theorem}
    Assuming CH, there exists a set $E$ such that 
    \begin{enumerate}
        \item $E$ is not 2-entangled.
        \item $E$ is concentrated on $\Q$ (hence not a Q-set nor a $\lambda$-set).
        \item $(E[\leq])^2$ is normal.
    \end{enumerate}
    \begin{proof}
        We will construct our set $E$ by preserving the strictly increasing function $x\mapsto x+1$ while at the same time making it concentrated on ${\mathbb Q}$ and killing all the strictly decreasing functions on uncountable sets. As in \ref{setE}, let $\Ff=\{f_\alpha:\alpha<\omega_1\}$ enumerate all strictly decreasing functions defined on Borel subsets of the reals, $\Aa=\{ A_\alpha:\alpha<\omega_1\}$ an enumeration of all Euclidean open sets containing $\Q$ and, for each $\alpha<\omega_1$, let $C_\alpha=\cap_{\beta\leq\alpha} A_\alpha \setminus \Q$. We now define elements $x_\alpha\in\R$ recursively as follows:\\
        \textbf{Stage 0:} Take $x_0\in C_0$ such that $f_0(x_0)\not=x_0$ and $x_0 \not \in f_0(\Q)\cup f_0^{-1}(\Q)$\\
    \textbf{Stage $\boldsymbol{\alpha<\omega_1}$:} We have $x_\beta$ defined already for all $\beta<\alpha$, define
    \begin{align*}
        X_\alpha&= \{x_\beta:\beta<\alpha\} \cup \{x_\beta+1: \beta<\alpha\} \cup \Q\\
        I_\alpha&=\{ f_\beta(x): \beta\leq \alpha, x\in X_\alpha \}\\
        P_\alpha&=\{ f_\beta^{-1}(x): \beta\leq\alpha, x\in X_\alpha\}\\
    \end{align*}
    Note that all of the above sets are countable, and $C_\alpha$ is uncountable. Hence, we can choose $x_\alpha\in C_\alpha\setminus (X_\alpha\cup I_\alpha\cup P_\alpha)$.    \\
    Finally we define $E=\Q\cup \{x_\alpha:\alpha<\omega_1\}\cup\{x_\alpha+1:\alpha<\omega_1\}$.\\
    \textbf{Verify Property (1): }$E$ is not 2-entangled since the uncountable one-to-one increasing function $x\mapsto x+1$ exists.\\ 
    \textbf{Verify Property (2): }This follows exactly as in \ref{setE}.\\
    \textbf{Verify Property (3): }We will show $(E[\leq])^2$ is normal using \ref{thm:E^2normal}. 
    Fix any strictly decreasing function $g$ defined on subsets of $E$. By \ref{lem:extend}, there exists $F$ a strictly decreasing homeomorphism defined from a subset of $E$ into $E$ such that $dom(F)\setminus dom(g)$ is countable. There exists an $\alpha_0<\omega_1$ such that $F=f_{\alpha_0}$ on $dom(F)$. We will show the $f_{\alpha_0}$ can only be defined on a countable set, hence $dom(g)$ is countable and $E$ admits no strictly decreasing function on uncountable subsets. Indeed, for all $\beta>\alpha_0$, $f_{\alpha_{0}}(x_{\beta})$ can not be mapped to $\{x_{\beta}:\beta \geq \alpha_0\}\}$. For all $\gamma\geq \alpha_0$,
    \begin{align*}
        \text{If } \alpha_0 <\gamma <\beta,& \text{ we have that when choosing }x_\beta, \text{ we made sure}\\
        &x_\beta\not \in P_{\beta}\ni f_{\alpha_0}^{-1}(x_\gamma)\implies f_{\alpha_0}(x_{\beta})\neq x_\gamma  \\ 
        \text{If } \alpha_0 <\gamma <\beta,& \text{ we have that when choosing } x_\gamma, \text{ we made sure} \\
        &x_\gamma \not \in P_{\gamma}\ni f(x_{\beta})\implies x_\gamma\neq f_{\alpha_0}(x_\beta) 
    \end{align*}
    For the same reason, $f_{\alpha_0}(x_\beta)$ cannot be mapped to $\{ x_{\beta}+1: \beta\geq a_0\}$. Since $f_{\alpha_0}$ is one-to-one, $f_{\alpha_0}$ has countable domain.
    \end{proof}
\end{theorem}

\section{$\lambda$-sets and the Sorgenfrey square}

\begin{theorem}
    If $X$ is a $\lambda$-set, then $(X[\leq])^2$ is pseudo-normal.
\end{theorem}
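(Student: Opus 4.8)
The plan is to separate a countable closed set $C$ from a disjoint closed set $B$ in $(X[\leq])^2$, exploiting the $\lambda$-set hypothesis to convert a Euclidean $G_\delta$ description of a countable set into the covering property used throughout Section~2. By the shoelace lemma it suffices, for each pair of disjoint closed sets with one of them countable, to cover the countable one by a countable family of Sorgenfrey-open sets whose closures miss the other closed set; by symmetry I only need to handle the case where the countable closed set is the one to be covered. So let $C=\{z_k:k<\omega\}$ be the countable closed set and $B$ the other closed set, with $C\cap B=\emptyset$.

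The first step is to reduce to a one-dimensional statement via the projections. Write $z_k=(c_k,d_k)$ and let $P=\{c_k:k<\omega\}\cup\{d_k:k<\omega\}\subseteq X$, a countable subset of $X$. Because $X$ is a $\lambda$-set, $P$ is a relative $G_\delta$ in $X$, say $P=\bigcap_{n<\omega}V_n$ with each $V_n$ Euclidean-open in $X$. The idea is that this $G_\delta$ control on the coordinates of the points of $C$ is exactly what lets me build Sorgenfrey neighbourhoods of the points of $C$ whose Euclidean closures shrink down onto $C$. Concretely, I would first use that $B$ is Sorgenfrey closed and $C\cap B=\emptyset$ to choose, for each $k$, a radius $r_k>0$ with $U(z_k,r_k)\cap B=\emptyset$; the family $\{U(z_k,r_k)\}_{k<\omega}$ already covers $C$ by Sorgenfrey-open sets, so the only issue is to arrange that the \emph{closures} miss $B$.

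The heart of the argument is controlling the limit points. A point $w$ in the Euclidean closure of $U(z_k,r_k)$ but outside it can only create trouble if $w\in B$; such a $w$ must have at least one coordinate equal to a coordinate of $z_k$ (it lies on the lower/left boundary of the half-open box), so its coordinates are constrained by $P$. The plan is to use the $G_\delta$ representation $P=\bigcap_n V_n$ to peel off, stage by stage, neighbourhoods of the coordinate values $c_k,d_k$ inside the $V_n$'s, refining each $r_k$ so that the boundary of the box $U(z_k,r_k)$ stays inside a Euclidean-open set disjoint from $B$. Since $C$ is countable I can do this refinement coordinate by coordinate and index by index without any cardinality obstruction, using a standard diagonal/bookkeeping argument over $k<\omega$ and $n<\omega$. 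The resulting shrunken boxes still cover $C$, and now have Sorgenfrey closures disjoint from $B$, giving $C$ the covering property with respect to $B$.

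The main obstacle I anticipate is precisely the closure computation on the boundary of the half-open boxes: unlike the Euclidean case, the Sorgenfrey closure of $U(z,r)=[x,x+r)\times[y,y+r)$ adds boundary points where one coordinate has been pushed up to $x+r$ or $y+r$, and one must check that the troublesome limit points that could lie in $B$ are governed by the projections of $C$ and hence by the $\lambda$-set $G_\delta$ structure. Making this rigorous requires identifying exactly which boundary points of a shrunk box can approach $B$ and verifying that the countable coordinate set $P$ being $G_\delta$ suffices to separate them; this is where the $\lambda$-set hypothesis is genuinely used (as opposed to the full $Q$-set strength needed for arbitrary, uncountable closed sets in Przymusi\'nski's theorem). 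Once that boundary analysis is in place, assembling the countable cover and invoking the shoelace lemma is routine.
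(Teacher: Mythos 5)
Your proposal has a fundamental gap: you carry out only the trivial half of the shoelace argument and omit the half where all the work (and the $\lambda$-set hypothesis) actually lives. The shoelace lemma requires covering \emph{each} of the two closed sets by countably many open sets whose closures miss the other, and these two tasks are not symmetric here, since one set is countable and the other need not be. Covering the countable set $C$ is immediate: a basic box $U(z,r)=[x,x+r)\times[y,y+r)$ is \emph{clopen} in $(X[\leq])^2$ (each factor $[a,b)$ is closed as well as open in the Sorgenfrey line), so for each $z_k\in C$ a single box $U(z_k,r_k)$ disjoint from $B$ already has Sorgenfrey closure disjoint from $B$; no shrinking, no $G_\delta$ representation of the coordinate set $P$, and no boundary analysis are needed. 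Your anticipated ``main obstacle'' --- that the Sorgenfrey closure of a half-open box adds boundary points --- does not occur: the relevant closure is the Sorgenfrey one, not the Euclidean one, and it adds nothing.

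The genuinely hard direction, which your proposal never addresses, is covering the (possibly uncountable) closed set $B$ by \emph{countably many} open sets whose closures miss $C$: one clopen box per point of $B$ gives an uncountable family, and an uncountable union of clopen boxes need not be closed, so grouping them into countably many unions can create new limit points landing in $C$. This is exactly where the paper uses the $\lambda$-set hypothesis, and it applies it differently than you propose: since $p_0(C)$ and $p_1(C)$ are countable subsets of the $\lambda$-set $X$, their complements are relative $F_\sigma$, say $\bigcup_n F_n$ and $\bigcup_m H_m$ with $F_n,H_m$ Euclidean closed. The part of $B$ whose points share a coordinate with some point of $C$ lies in countably many Sorgenfrey lines, hence is Lindel\"of and easily covered; the rest is split into countably many pieces indexed by $(n,m)$ according to which $F_n$ and $H_m$ contain the coordinates of a point, and the Euclidean closedness of $F_n$ and $H_m$ is precisely what guarantees that the Sorgenfrey closure of the union of boxes in each piece cannot reach a point of $C$. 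To repair your proof you would need to supply this entire argument.
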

\begin{proof}
    Let $A,B$ be closed sets and assume $B$ is countable. 
    Recall the shoelace lemma: to separate $A$ and $B$ it suffices to cover each set by a countable family of open sets whose closures are disjoint from the other closed set.
    
    It suffices to show that $A$ has the covering property with respect to $B$. Since $B$ is countable it clearly has the covering property with respect to $A$.
    We begin by defining the following sets:
    \begin{align*}
        B_0&= p_0(B)\\
        B_1&=p_1(B)\\
        A_0&= A\cap p_0^{-1}(B_0)\\
        A_1&= A\cap p_1^{-1}(B_1)\\
        W&= A\setminus(A_0\cup A_1)
    \end{align*}
    Note that $B_0$ is countable, then $p_0^{-1}(B_0)= \bigcup_{b\in B_0} p_0^{-1}(b) $ is a countable union of Lindel\"of spaces which is still Lindel\"of. Since $A$ is Sorgenfrey closed, $A_0$ is a closed subspace of a Lindel\"of space, which is still Lindel\"of. By the similar reason, $A_1$ is Lindel\"of as well. Hence both $A_0$ and $A_1$ have the covering property. Now it suffices to prove 
    \begin{claim}
        $W$ has the covering property
    \end{claim}
    \begin{proof}[Proof of Claim]
        We know that there are families $\{F_n:n\in \omega\}$ and $\{H_n:n\in \omega\}$ of relatively closed (in the Euclidean topology) subsets of $X$ such that 
        \begin{align*}
            p_0(W)\subseteq p_0(B)^c &= \cup_{n\in \omega} F_n\\
            p_1(W)\subseteq p_1(B)^c &= \cup_{m\in\omega} H_m
        \end{align*}
        For any $z\in W$, there exists $s_z>0$ such that $U(z,s_z)\cap B=\emptyset$. 
        Define $$W_{n,m}=\bigcup\{U(z,s_z):\ z=(x,y)\in W, x\in F_n,y\in H_m\}.$$
        We see $W\subseteq\cup_{n,m} W_{n,m}$ and each $W_{n,m}$ is Sorgenfrey open. We know that $W_{n,m}\cap B=\emptyset$, so it suffices to show $(\overline{W_{n,m}}^\sigma\setminus W_{n,m})\cap B=\emptyset$. 
        
        Assume otherwise. Then there exists a $z\in B\setminus W_{n,m}$ such that for all $k\in \omega$, there exists a $w_k=(x_k,y_k)\in W_{n,m}$ such that, $U(z,1/k)\cap U(w_k,s_{w_k})\neq \emptyset$.  We have the following three cases depicted below.\\
        \includegraphics[scale=0.88]{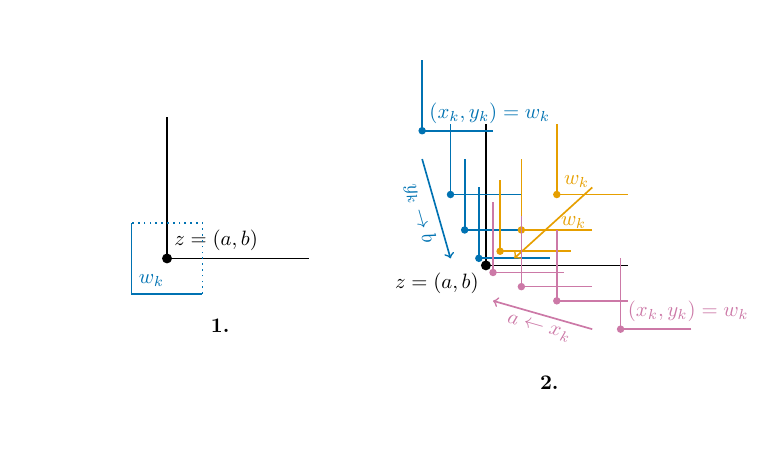}
        \begin{enumerate}
            \item For infinitely many $k$ we have $x_k<a,y_k<b$\\
            In this case, we have $(a,b)\in U(w_k,s_{w_k})\subseteq W_{n,m}$, which is a contradiction since we chose $z=(a,b)$ to be disjoint from $W_{n,m}$
            \item For infinitely many $k$ $x_k\to a$. But since each $x_k$ is in $F_n$, it follows that $a\in F_n$ which is impossible.
            \item For infinitely many $k$ $y_k\to b$. But since $y_k$ is in $H_m$ it would follow that $b\in H_m$, also impossible. 
        \end{enumerate}
        Hence $\overline{W_{n,m}}\cap B=\emptyset$, we have the desired covering. 
    \end{proof}
\end{proof}

From the previous example, we know that $(X[\leq])^2$ may be normal, while $X$ is not a $\lambda$-set. However, we do have the following partial converse analogous to Proposition \ref{normalconverse}.
\begin{proposition}
    Let $(X[\leq])^2$ be pseudo-normal. If there exists a strictly decreasing homeomorphism $f:X\to X$ with respect to the Euclidean topology, then $X$ is a $\lambda$-set. 
\end{proposition}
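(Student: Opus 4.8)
The plan is to follow the proof of Proposition~\ref{normalconverse} almost verbatim, replacing normality by pseudo-normality and working with a single fixed countable set. The key structural input is the same: since $f$ is continuous for the Euclidean topology, its graph $G_f$ is closed in the Euclidean square, and as the Sorgenfrey topology is finer (so has at least as many closed sets), $G_f$ remains closed in $(X[\leq])^2$. Strict monotonicity makes $G_f$ discrete there, since the basic neighbourhood $U((x,f(x)),r)$ meets $G_f$ only in $(x,f(x))$ (a point $(x',f(x'))$ in it forces $x'\geq x$ and $f(x')\geq f(x)$, which by strict decrease forces $x'=x$). Thus $G_f$ is closed and discrete, so every subset of $G_f$ is closed in $(X[\leq])^2$.

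The one point that requires care is matching the output of Lemma~\ref{lem:fsigma}, which produces an $F_\sigma$ set, with the definition of a $\lambda$-set, which asks only that \emph{countable} sets be $G_\delta$. I therefore fix an arbitrary countable $C\subseteq X$ and aim to show $C$ is relatively $G_\delta$, equivalently that its complement $A:=X\setminus C$ is relatively $F_\sigma$. Setting $B:=C$, both $G_{f|_A}$ and $G_{f|_B}$ are closed, being subsets of the closed discrete set $G_f$, and $G_{f|_B}=G_{f|_C}$ is countable. Pseudo-normality applies precisely because one of the two closed sets is countable, and it yields disjoint Sorgenfrey-open sets $\Uu\supseteq G_{f|_A}$ and $\Vv\supseteq G_{f|_B}$.

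With this separation in hand, Lemma~\ref{lem:fsigma} (applied to this choice of $A$ and $B$) gives that $A=X\setminus C$ is relatively $F_\sigma$ in $X$, hence $C$ is relatively $G_\delta$ in $X$. Since $C$ was an arbitrary countable subset, $X$ is a $\lambda$-set, as desired.

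I do not expect a serious obstacle: the topological content is carried entirely by Lemma~\ref{lem:fsigma} and the closed-discreteness of $G_f$, both already available. The only thing to get right is the bookkeeping of roles. The countable set $C$ must be the one that pseudo-normality is allowed to separate, so $C$ plays the role of $B$ (whose graph $G_{f|_C}$ is countable), while it is the co-countable complement $A$ whose $F_\sigma$-ness Lemma~\ref{lem:fsigma} certifies. This inversion is exactly what converts the conclusion into ``$C$ is $G_\delta$'', which is the correct form for the $\lambda$-set property, and it is the sole place where the argument differs from the $Q$-set case of Proposition~\ref{normalconverse}.
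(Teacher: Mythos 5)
Your proposal is correct and follows essentially the same route as the paper: both use that $G_f$ is closed and discrete in $(X[\leq])^2$, apply pseudo-normality to separate the countable closed graph $G_{f|_C}$ from $G_{f|_{X\setminus C}}$, and then invoke Lemma~\ref{lem:fsigma} with the co-countable set in the role of $A$ to conclude $X\setminus C$ is relatively $F_\sigma$, i.e.\ $C$ is relatively $G_\delta$. Your explicit verification that $G_f$ is closed and discrete, and your care with which set plays which role in the lemma, are details the paper leaves implicit but the argument is the same.
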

\begin{proof}
    Since $f$ is strictly decreasing we have that the graph $G_f$ is closed and discrete in $(X[\leq])^2$. Therefore any subset of $G_f$ is closed. Let $A$ be any countable subset, we have $G_{f|_A}$ is countable as well, hence $G_{f|_A}$ and $G_{f|_{X\setminus A}}$ are closed and can be separated by Sorgenfrey open sets. Then by \ref{lem:fsigma}, $X\setminus A$ is relatively $F_\sigma$ in $X$, hence $X$ is a $ \lambda$-set.
\end{proof}

Finally, we close with some remarks and a question. As we have seen, the converse to Przymusi\'nski's theorem is independent (e.g., assuming CH). Is the converse consistent? I.e., is it consistent that for an uncountable subset of the reals $X$, if $(X[\leq])^2$ is normal then $X$ must be a Q-set. This would vacuously follow from a positive answer to the following possibly more interesting question:

\begin{que} Is it relatively consistent with ZFC that $(X[\leq])^2$ is never normal for $X$ an uncountable set of reals?
\end{que}

By Corollary \ref{BA} if Baumgartner's Axiom holds and there are no $Q$-sets, then $X([\leq])^2$ is never normal for any uncountable set of reals $X$. So we ask, 
\begin{que} Does Baumgartner's Axiom imply that there is a $Q$-set?
\end{que}

\bibliographystyle{abbrv}
\bibliography{references}

\end{document}